\newtheorem{theorem}{Theorem}[section]
\newtheorem{lemma}[theorem]{Lemma}
\def\co{\colon\thinspace}
\newcommand{\cser}{\mathcal{C}}
\DeclareMathOperator{\Leib}{Leib}
\begin{document}

\title{On Engel's Theorem for Leibniz algebras} 
\author{Donald W. Barnes}
\address{1 Little Wonga Rd, Cremorne NSW 2090 Australia}
\email{donwb@iprimus.com.au}

\subjclass[2000]{Primary 17A32}
\keywords{Leibniz algebras, nilpotent, Engel's Theorem}

\begin{abstract}  I give a simpler proof of the generalisation of Engel's Theorem to Leibniz algebras. 
\end{abstract}

\maketitle

\section{Introduction}

Let $L$ be a finite-dimensional left Leibniz algebra.  I denote by $d_a$ the left multiplication operator $d_a\co L \to L$ defined by $d_a(x) = ax$ for all $a, x \in L$.
Ayupov and Omirov \cite[Theorem 2]{AyO} prove: 
\begin{theorem}  Suppose that the left multiplication operator
$d_a$ is nilpotent for all $a \in L$.  Then $L$ is nilpotent. \end{theorem}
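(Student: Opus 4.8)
The plan is to deduce the theorem from the classical (linear) form of Engel's Theorem for Lie algebras; the one observation that makes this work is that the left multiplications already constitute a Lie algebra of nilpotent operators, with no need to pass to the subalgebra they generate.

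First I would note the standard consequence of the Leibniz identity that each $d_a$ is a derivation of $L$, so that for all $a,b,x\in L$
\[
  [d_a,d_b](x)\;=\;a(bx)-b(ax)\;=\;(ab)x\;=\;d_{ab}(x),
\]
i.e.\ $[d_a,d_b]=d_{ab}$. Together with $d_{a+b}=d_a+d_b$ and $d_{\lambda a}=\lambda d_a$ this shows that $D:=\{d_a\co a\in L\}$ is a Lie subalgebra of $\mathfrak{gl}(L)$, and by hypothesis \emph{every} element of $D$ is a nilpotent operator on the finite-dimensional space $L$.

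I would then apply Engel's Theorem to $D\le\mathfrak{gl}(L)$ in its strong form: there is a chain of subspaces $0=L_0\subset L_1\subset\cdots\subset L_n=L$ with $DL_i\subseteq L_{i-1}$, that is $LL_i\subseteq L_{i-1}$, for every $i$. An immediate induction then gives $\cser^k(L)\subseteq L_{n+1-k}$ for the lower central series $\cser^1(L)=L$, $\cser^{k+1}(L)=L\,\cser^k(L)$, so $\cser^{n+1}(L)=0$ and $L$ is nilpotent.

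The content of the argument is concentrated entirely in the identity $[d_a,d_b]=d_{ab}$, which is what turns $\{d_a\co a\in L\}$ into a Lie algebra of nilpotent operators on the nose; everything after that is the classical proof. Consequently I do not expect a real obstacle, only one point requiring routine care: matching the conclusion $\cser^{n+1}(L)=0$ with whatever form of nilpotence is wanted. If ``nilpotent'' is read off the left lower central series $\cser$ there is nothing to do; otherwise one checks, by an induction on $i$ using only the Leibniz identity, that $\cser^i(L)\,\cser^j(L)\subseteq\cser^{i+j}(L)$, which forces the two-sided (and right) lower central series to terminate as well. It is this bookkeeping, not the Engel-type argument, where any effort would go.
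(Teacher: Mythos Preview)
Your argument is correct. The identity $[d_a,d_b]=d_{ab}$, which is just a rewriting of the left Leibniz identity, does make $D=\{d_a:a\in L\}$ a Lie subalgebra of $\mathfrak{gl}(L)$ consisting of nilpotent operators, and the linear Engel theorem then gives a flag with $LL_i\subseteq L_{i-1}$, hence $\cser^{n+1}(L)=0$. Your closing remark about reconciling the left lower central series with other notions of nilpotence is the only place requiring any care, and the inequality $\cser^i(L)\,\cser^j(L)\subseteq\cser^{i+j}(L)$ handles it.

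This is, however, a genuinely different route from the paper's. The paper deduces the theorem from the stronger module-theoretic statement Theorem~\ref{pats}: for \emph{any} representation $(S,T)$ with all $T_a$ nilpotent, there is a common null vector for both $T$ and $S$. That result does not follow from your observation alone, because while $\{T_a\}$ is still a Lie algebra for the same reason, controlling the right action $S$ requires the structure theorem for irreducible Leibniz bimodules (Theorem~\ref{irred}): one reduces to an irreducible faithful $V$, where $L/\cser_L(V)$ is forced to be a Lie algebra and $V$ is either symmetric or antisymmetric. Your approach is shorter and more elementary for the statement at hand; the paper's approach yields the extra information about arbitrary representations and about the right action, at the cost of invoking the bimodule dichotomy.
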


Let $V$ be an $L$-module.  I denote the left action of $L$ on $V$ by $T$ and the right by $S$, thus $T_a(v) = av$ and $S_a(v) = va$.
Patsourakos obtains the above theorem as a corollary of the stronger result \cite[Theorem 7]{Pats}:

\begin{theorem} \label{pats} Let $(S,T)$ be a representation of $L$ on a vector space $V\ne 0$ such that $T_a$ is nilpotent for all $a \in L$.  Then $S_a$ is nilpotent for all $a \in L$ and there exists $v \in V$, $v \ne 0$ such that $T_a(v) = S_a(v) = 0$ for all $a \in L$.\end{theorem}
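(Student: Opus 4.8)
The plan is to reduce everything to the classical Engel theorem for a linear Lie algebra by exploiting one identity valid in every Leibniz representation. To obtain it, form the split null extension $L\oplus V$, in which $V$ is an abelian ideal with $av=T_a(v)$ and $va=S_a(v)$, and expand the left Leibniz identity $x(yz)=(xy)z+y(xz)$ on the triples $(a,b,v)$, $(a,v,b)$ and $(v,a,b)$. The first gives $T_{ab}=[T_a,T_b]$, the second gives $S_{ab}=[T_a,S_b]$, and the third gives $S_{ab}=S_bS_a+T_aS_b$; comparing the last two yields the key relation
\[
S_b\,(T_a+S_a)=0\qquad(a,b\in L),
\]
that is, $\mathrm{im}(T_a+S_a)\subseteq\bigcap_{b\in L}\ker S_b$. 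Taking $b=a$ gives $S_a^2=-S_aT_a$, whence by induction $S_a^{\,m+1}=(-1)^m S_aT_a^{\,m}$; since $T_a^{\,N}=0$ for some $N$, this already proves that every $S_a$ is nilpotent.

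For the existence of the common null vector I would work inside $K:=\bigcap_{a\in L}\ker S_a$. The point here --- and the only place an idea is needed rather than bookkeeping --- is that $K$, rather than the more tempting $\bigcap_{a}\ker T_a$ (which need not be a submodule), is a subbimodule: $S_a(K)=0\subseteq K$ trivially, and for $v\in K$ the key relation gives $S_bT_a(v)=-S_bS_a(v)=0$, so $T_a(K)\subseteq K$. On $K$ the right action is zero, so we are back in the Lie situation: $\{\,T_a|_K : a\in L\,\}$ is a linear Lie algebra of nilpotent operators, because $[T_a,T_b]=T_{ab}$.

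It remains to see that $K\ne 0$. If $K=0$, then $\mathrm{im}(T_a+S_a)\subseteq K=0$ forces $S_a=-T_a$ for all $a$, so $K=\bigcap_a\ker T_a$; but the classical Engel theorem applied to the linear Lie algebra $\{\,T_a:a\in L\,\}\subseteq\mathrm{End}(V)$ of nilpotent operators on $V\ne 0$ yields a nonzero common null vector, contradicting $K=0$. Hence $K\ne0$, and one last application of the classical Engel theorem, now to $\{\,T_a|_K\,\}$ acting on $K$, produces a nonzero $v\in K$ with $T_a(v)=0$ for all $a$; since $v\in K$ we also get $S_a(v)=0$, as required. I do not anticipate a serious obstacle: once $S_b(T_a+S_a)=0$ is established, the Leibniz-specific difficulty disappears and everything follows from ordinary Engel. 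The two things that need care are the sign bookkeeping in deriving that identity and the realisation that $K$, not $\bigcap_a\ker T_a$, is the subspace on which to run the classical argument.
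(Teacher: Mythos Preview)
Your argument is correct and follows a genuinely different route from the paper's. The paper first passes to an irreducible submodule $W\subseteq V$ (implicitly using finite-dimensionality of $V$) and then invokes its Theorem~\ref{irred}: for a faithful irreducible bimodule the acting algebra is already a Lie algebra and $W$ is either symmetric ($S_a=-T_a$) or antisymmetric ($S_a=0$), so classical Engel finishes in one line. You instead extract the single identity $S_b(T_a+S_a)=0$ directly from the representation axioms and never need an irreducible quotient or the structural lemma. What you gain is self-containedness and a slightly stronger result: your proof works for $V$ of arbitrary dimension, since classical Engel only needs the Lie algebra $\{T_a:a\in L\}\subseteq\mathrm{End}(V)$ to be finite-dimensional. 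What the paper's route gains is brevity once Theorem~\ref{irred} is available, together with a structural explanation of why the two alternatives $S_a=-T_a$ and $S_a=0$---which your identity encodes algebraically via $\mathrm{im}(T_a+S_a)\subseteq\bigcap_b\ker S_b$---are the only possibilities on an irreducible piece.
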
 

I call the subspace $\langle x^2 \mid x \in L \rangle$ spanned by the elements of $L$ the Leibniz kernel of $L$ and denote it $\Leib(L)$.

\begin{lemma} \label{th-leib} The subspace $\Leib(L) = \langle x^2 \mid x \in L \rangle$  is an ideal of $L$.
\end{lemma}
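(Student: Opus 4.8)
The plan is to verify directly that $\Leib(L)$ is closed under both left and right multiplication by arbitrary elements of $L$, using the defining (left) Leibniz identity, which in the notation $d_a(x)=ax$ asserts that every $d_a$ is a derivation, i.e.\ $a(bc) = (ab)c + b(ac)$ for all $a,b,c\in L$. As a preliminary I would record the polarization identity $xy+yx = (x+y)^2 - x^2 - y^2$, which shows $xy+yx\in\Leib(L)$ for all $x,y\in L$; combined with bilinearity of the product, this lets me reduce every containment to the generating squares $x^2$.

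For closure under right multiplication, I would specialize the Leibniz identity to $a=b=x$, $c=y$, obtaining $x(xy) = (x^2)y + x(xy)$, hence $x^2 y = 0$ for all $x,y\in L$. Since $\Leib(L)$ is spanned by the squares, linearity gives $\Leib(L)\cdot L = 0 \subseteq \Leib(L)$. For closure under left multiplication, I would apply the Leibniz identity to $a(x\cdot x)$, getting $a\,x^2 = (ax)x + x(ax)$; writing $u = ax$, the right-hand side is $ux + xu$, which lies in $\Leib(L)$ by the polarization identity above. Extending by linearity over a spanning set of squares then yields $L\cdot\Leib(L)\subseteq\Leib(L)$, so $\Leib(L)$ is an ideal.

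I do not anticipate a genuine obstacle here; the points requiring care are merely (i) using the correct left-handed form of the Leibniz identity matching the $d_a$ convention, and (ii) keeping in mind that $\Leib(L)$ is the \emph{linear span} of the squares, not the set of squares, so that each of the two containments must be obtained by bilinear extension from the generators $x^2$ rather than checked only on them.
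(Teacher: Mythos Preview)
Your proposal is correct and follows essentially the same approach as the paper: both first derive $x^2y=0$ from the Leibniz identity, and then express a left product by a square as a linear combination of squares. The only cosmetic difference is in the second step, where the paper writes $xy^2=(x+y^2)^2-x^2$ directly (using the already established $y^2z=0$), while you expand $a\,x^2=(ax)x+x(ax)$ and then apply polarization.
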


\begin{proof}  For all $x,y \in L$, we have $x^2y = 0$ and $xy^2 = (x+y^2)^2 - x^2$.
\end{proof}

I give a simpler proof of Patsourakos's Theorem.  For this, I need a result which is implicit in Loday and Pirashvili (1996).  I denote by $\cser_L(V)$ the kernel of the representation of $L$ on $V$, that is, the centraliser in $L$ of $V$ in the split extension of $V$ by $L$.

\begin{theorem}\label{irred}  Let $L$ be a finite-dimensional Leibniz algebra and let $V$ be a finite-dimensional irreducible $L$-bimodule.  Then $L/\cser_L(V)$ is a Lie algebra and either $VL=0$ or $vx = -xv$ for all $x \in L$ and $v \in V$. \end{theorem}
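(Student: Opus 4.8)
The plan is to pass to the split extension $E = L \oplus V$, which is a left Leibniz algebra in which $V$ is an abelian ideal, and to manufacture a subbimodule of $V$ out of $\Leib(E)$. Since $v^2 = 0$ for $v \in V$, we have $(a+v)^2 = a^2 + (av + va)$ for $a \in L$, $v \in V$, and every element of $E$ has the form $a+v$; hence $\Leib(E)$, the span of the squares of $E$, is
\[
\Leib(E) = \Leib(L) \oplus W, \qquad W := \langle\, av + va \mid a \in L,\ v \in V \,\rangle \subseteq V,
\]
the sum being direct relative to $E = L \oplus V$ (it contains $\Leib(L)$ by taking $v=0$, hence contains each $av+va$, hence equals $\Leib(L)+W$). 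In particular $W = \Leib(E) \cap V$.

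Next I would invoke Lemma~\ref{th-leib} for the algebra $E$: $\Leib(E)$ is an ideal of $E$, and $V$ is an ideal of $E$, so $W = \Leib(E)\cap V$ is an ideal of $E$ contained in $V$ — equivalently, an $L$-subbimodule of $V$. Since $V$ is irreducible, either $W = 0$ or $W = V$. If $W = 0$, then $av + va = 0$ for all $a,v$, i.e.\ $vx = -xv$ for all $x \in L$, $v \in V$, which is the second alternative. If $W = V$, I would recall (as used already in the proof of Lemma~\ref{th-leib}) that $x^2 y = 0$ for all $x,y \in E$; since $\Leib(E)$ is spanned by squares, every element of $\Leib(E)$ annihilates $E$ on the left. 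Applying this with a left factor $w \in W = V$ and a right factor $a \in L$ gives $wa = 0$, that is, $S_a(w) = 0$ for every $w \in V$; hence $S_a = 0$ for all $a \in L$, i.e.\ $VL = 0$. This settles the dichotomy.

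It remains to see that $L/\cser_L(V)$ is a Lie algebra. In both of the cases above, $S_a$ is determined by $T_a$ (either $S_a = 0$ or $S_a = -T_a$), so $\cser_L(V) = \{\, a \in L \mid T_a = 0 = S_a \,\} = \{\, a \in L \mid T_a = 0 \,\}$ is exactly the kernel of $\varphi \co L \to \operatorname{End}(V)$, $\varphi(a) = T_a$. The Leibniz identity $a(bv) = (ab)v + b(av)$ gives $T_{ab} = [T_a, T_b]$, so $\varphi$ is a homomorphism of Leibniz algebras and its image $\varphi(L)$ is closed under the commutator bracket, hence is a Lie subalgebra of $\operatorname{End}(V)$. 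Therefore $L/\cser_L(V) \cong \varphi(L)$ is a Lie algebra.

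I expect the crux to be the identification of $W$ together with the observation that it is a subbimodule — that is the only step with genuine structural content, and it is precisely where Lemma~\ref{th-leib} is used. The rest is bookkeeping with the Leibniz identity; the one place a little care is needed is the case $W = V$, where one must notice that it is the "easy" direction — that $\Leib(E)$ annihilates on the left — which forces $VL = 0$.
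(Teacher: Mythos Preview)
Your argument is correct and is essentially the paper's proof: form the split extension, look at its Leibniz kernel, and use irreducibility of $V$ to force the dichotomy. The only organizational differences are that the paper first reduces to the faithful case (so that $\Leib(E)\subseteq \cser_E(V)=V$ directly, giving $x^2=0$ for $x\in L$), whereas you compute $\Leib(E)=\Leib(L)\oplus W$ explicitly and then obtain the Lie-algebra conclusion via the left-action homomorphism $a\mapsto T_a$; both routes are short and equivalent.
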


\begin{proof}  Without loss of generality, we may suppose $\cser_L(V) = 0$.  Form the split extension $X$ of $V$ by $L$ and let $K = \Leib(X)$.  Since $V$ is a minimal ideal of $X$, either $K \ge V$ or $K \cap V = 0$.  As $K$ is an abelian ideal, in either case, we have $K \le \cser_X(V) = V$.  For all $x \in L$, $x^2 \in K$, so $x^2 = 0$.  Thus $L$ is  a Lie algebra.  Either $K= V$ or $K = 0$.  If $K = V$, then $VX = 0$.  If $K = 0$, then $X$ is a Lie algebra, so $vx = -xv$ for all $x \in L$ and $v \in V$.
\end{proof}
   
\section{Proof of Theorem \ref{pats}}
\begin{proof} If $W$ is a submodule of $V$, it satisfies the conditions of Theorem \ref{pats}, so we may suppose that $V$ is irreducible.  By working modulo the kernel of the representation, we may suppose that $V$ is faithful.  By Theorem \ref{irred}, $L$ is a Lie algebra and $V$ is either symmetric ($vx = -xv$ for all $x\in L $ and $v \in V$), or antisymmetric ($vx=0$ for all $x \in L$ and $v \in V$).  By Engel's Theorem for Lie algebras of linear transformations, there exists $v \in V$, $v \ne 0$ such that $xv = 0$ for all $x \in L$.  In either case, we have $vx = 0$ for all $x \in L$.
\end{proof}

\bibliographystyle{amsplain}

\end{document}